\numberwithin{equation}{section}
\numberwithin{figure}{section}
\theoremstyle{plain}
\newtheorem{thm}{\protect\theoremname}[section]
\theoremstyle{plain}
\theoremstyle{definition}
\newtheorem{defn}[thm]{\protect\definitionname}
\theoremstyle{plain}
\newtheorem{lem}[thm]{\protect\lemmaname}
\newtheorem{cor}[thm]{\protect\corollaryname}
\theoremstyle{plain}
\theoremstyle{plain}
\newtheorem{exa}[thm]{\protect\examplename}
\theoremstyle{plain}
\providecommand{\definitionname}{Definition}
\providecommand{\lemmaname}{Lemma}
\providecommand{\theoremname}{Theorem}
\providecommand{\corollaryname}{Corollary}
\providecommand{\remarkname}{Remark}
\providecommand{\propositionname}{Proposition}
\providecommand{\examplename}{Example}
\DeclareMathOperator{\loc}{loc}
\DeclareMathOperator{\dv}{div}
\DeclareMathOperator{\ess}{ess}
\begin{document}

\title{Space Quasiconformal Mappings and Neumann Eigenvalues in Fractal Domains}

\author{V.~Gol'dshtein, R.~Hurri-Syrj\"anen,  and  A.~Ukhlov}

\address{Vladimir Gol'dshtein:
Department of Mathematics,
Ben-Gurion University of the Negev,
P. O. Box 653, Beer Sheva, 84105, Israel}
\email{vladimir@bgu.ac.il}

\address{Ritva Hurri-Syrj\"anen:
Department of Mathematics and Statistics,
Gustaf H\"allstr\"omin katu 2 b, 
FI-00014 University of Helsinki, Finland}
\email{ritva.hurri-syrjanen@helsinki.fi}

\address{Alexander Ukhlov: 
Department of Mathematics,
Ben-Gurion University of the Negev\,
P. O. Box 653, Beer Sheva, 84105, Israel}
\email{ukhlov@math.bgu.ac.il}

\thanks{R.~Hurri-Syrj\"anen whose  research was supported in part by a grant from
the Finnish Academy of Science and Letters, Vilho, Yrj\"o and Kalle V\"ais\"al\"a Foundation, is grateful for the hospitality given by the Department of Mathematics of the Ben-Gurion University of the Negev.
V.~Gol'dshtein's research was supported by the United States-Israel Binational Science Foundation (BSF Grant No. 2014055).	}
\date{\today}

\keywords{Neumann eigenvalues, Quasiconformal mappings, Sobolev spaces}
\subjclass[2010]{35P15, 46E35, 30C65}

\begin{abstract}
We study the variation of the Neumann eigenvalues of the $p$-Laplace operator under quasiconformal perturbations of space domains. This study allows to obtain lower estimates of the Neumann eigenvalues in fractal type domains. The suggested approach is based on the geometric theory of composition operators in connections with the quasiconformal mapping theory.

\end{abstract}
\maketitle

\maketitle
\markboth{\textsc{Space quasiconformal mappings and Neumann eigenvalues ...}}
{\textsc{V. Gol'dshtein,  R. Hurri-Syrj\"anen, and A. Ukhlov}}

\section{Introduction }

Let $\Omega$ be a bounded domain in the $n$-dimensional Euclidean space $\mathbb R^n\,,$ $n\geq 2$. We consider the Neumann eigenvalue problem for the $p$-Laplace operator, $p>1$,
\[
\begin{cases}
-\textrm{div}(|\nabla u|^{p-2}\nabla u)=\mu_p|u|^{p-2}u & \text{in $\Omega$}\\
\frac{\partial u}{\partial n}=0 & \text{on $\partial \Omega$}.
\end{cases} 
\]
This classical formulation is correct for bounded Lipschitz domains. The weak statement of this spectral problem is as follows: a function $u$ solves the previous problem, if and only if,
$u \in W^{1,p}(\Omega)$ and
$$
\int\limits _{\Omega} (|\nabla u(x)|^{p-2}\nabla u(x) \cdot \nabla v(x))\,dx =
\mu_p \int\limits _{\Omega} |u(x)|^{p-2} u(x) v(x)\,dx
$$ 
for all $v \in W^{1,p}(\Omega)$ and is correct for any bounded domains $\Omega$ in $\mathbb R^n$. 

The spectral stability estimates for elliptic operators were discussed, for example, in \cite{BL06,BL08,BLL08,CH53,D93,H05,K66}.
The main result of the article gives estimates of the variation of the first non-trivial Neumann eigenvalue $\mu_p$ under quasiconformal perturbations of domains: 

\vskip 0.5cm

{\bf Theorem A.} {\it  
Let a bounded domain $\Omega$ in $\mathbb R^n$ be a $(p,p)$-Sobolev-Poincar\'e domain, $1<p<\infty $. \color{black}Assume that there exists a Lipschitz $K$-quasiconformal homeomorphism $\varphi: \Omega\to\widetilde{\Omega}$ of a domain $\Omega$ onto a bounded domain $\widetilde{\Omega}$ such that 
$$
Q_{p}(\Omega):=\ess\sup\limits_{x\in\Omega}|D\varphi(x)|^{\frac{p-n}{p}}<\infty.
$$
Then 
$$
\frac{1}{\mu_p(\widetilde{\Omega})}\leq K
Q^p_{p}(\Omega)\||D\varphi|^n\mid L_{\infty}(\Omega)\|
\cdot \frac{1}{\mu_p(\Omega)}.
$$
}
\vskip 0.5cm

The lower estimates of the first non-trivial Neumann eigenvalues $\mu_p(\Omega)$ in basic domains $\Omega$ which can be union of convex domains will be considered in Section 3. Note, that if $\mu_p(\Omega)$ is calculated, then Theorem A gives lower estimates of $\mu_p(\widetilde{\Omega})$ in a large class of (non)convex domains in the terms of quasiconformal geometry of $\widetilde{\Omega}$.
The lower estimates of first non-trivial Neumann eigenvalues for convex domains in terms of Euclidean diameters and isoperimetric inequalities were intensively studied in the last decades  (see, for example, \cite{BCT15,ENT,FNT,PW}).

\vskip 0.5cm

As an example we consider the non-convex star-shaped domain $\Omega_{\delta}=\Omega_1\cup\Omega_2$, $\delta>0$ given, 
$\alpha=\delta (\sqrt{3}-1)/2$,
where 
\begin{equation*}
\Omega_1=\bigl\{(x',x_n)\in R^n\,: \max\{\vert x'\vert -\delta\,,-\alpha\} < x_n<\alpha\bigr\}
\end{equation*}
and
\begin{equation*}
\Omega_2=\bigl\{(x',x_n)\in R^n\,:  -\alpha< x_n<\min\{\delta -\vert x'\vert\,, \alpha\}\bigr\}\,.
\end{equation*}

Let $n=3$. 
Then, $\Omega_{\delta}=\Omega_1\cup\Omega_2$ is a $(\delta (\sqrt{3}-1)/2\,, \delta\sqrt{2}) $-John domain 
and there exists a $K$-quasiconformal mapping $\varphi:\mathbb R^3\to \mathbb R^3$ such that $\varphi(B^3(0,1))=\Omega_{\delta}$. By Theorem A (Example C) for $p>3$
$$
\mu_p(\Omega_{\delta})\geq 
\frac{\sqrt{2(4-\sqrt{6}-\sqrt{2})}}
{\delta^2(4+\sqrt{6}+\sqrt{2})^{1/4}
(\sqrt{4+\sqrt{6}-\sqrt{2}}+\sqrt{4-\sqrt{6}-\sqrt{2}})^2}\mu_p(B^3(0,1))\,.
$$

\vskip 0.5cm

The proof of Theorem A is based on estimates of constants in the Sobolev-Poincar\'e inequalities.

Let $1\leq r,p\leq \infty$. A bounded domain $\Omega$ in $\mathbb R^n$ is called a $(r,p)$-Sobolev-Poincar\'e domain, if for any function $f\in L^1_p(\Omega)$, the $(r,p)$-Sobolev-Poincar\'e inequality
$$
\inf\limits_{c\in\mathbb R}\|f-c\mid L_r(\Omega)\|\leq B_{r,p}(\Omega )\|\nabla f\mid L_p(\Omega)\|
$$
holds.

It is well known  that the constant $B_{r,p}(\Omega)$ depends on the geometry of $\Omega$, see, for example, \cite{M}.
We prove
\vskip 0.3cm

{\bf Theorem B.}
{\it Let a bounded domain $\Omega$ in $\mathbb R^n$ be a $(r,q)$-Sobolev-Poincar\'e domain, $1<q\leq r<\infty$. Suppose that  there exists a $K$-quasiconformal homeomorphism $\varphi: \Omega\to\widetilde{\Omega}$ of a domain $\Omega$ onto a bounded domain $\widetilde{\Omega}$, so that $\varphi$ 
 belongs to the Sobolev space $L^1_{\alpha}(\Omega)$ for some $\alpha>n$. Suppose additionally that
$$
Q_{p,q}(\Omega):=\left(\int\limits_{\Omega}|D\varphi|^{\frac{(p-n)q}{p-q}}~dx\right)^{\frac{p-q}{pq}}<\infty.
$$
for some $p\in[q,r)$.
Then for $1\leq s=\frac{\alpha-n}{\alpha}r$ in the domain $\widetilde{\Omega}$ the $(s,p)$-Sobolev-Poincar\'e inequality
holds and 
$$
B_{s,p}(\widetilde{\Omega})\leq K^{\frac{1}{p}}
\min\limits_{1\leq q<p}\left( Q_{p,q}(\Omega)\|D\varphi\mid L_{\alpha}(\Omega)\|^{\frac{n}{s}}\right)
\cdot B_{r,q}(\Omega),
$$
where $B_{r,q}(\Omega)$ is the best constant in the $(r,q)$-Sobolev-Poincar\'e inequality in the domain $\Omega$.
}

\vskip 0.5cm

The suggested method of investigation is based on the geometric theory of composition
operators \cite{U1,VU1} and its applications to the Sobolev
type embedding theorems \cite{GGu,GU}. 

In the recent works \cite{BGU1,BGU2,GPU17,GU16,GU2016} the spectral
stability problem and lower estimates of Neumann eigenvalues in planar domains were considered. In \cite{GU17} spectral estimates in space domains using the theory of weak $p$-quasiconformal mappings were obtained.

\section{Quasiconformal Composition Operators and Neumann Eigenvalues}

\subsection{Notation}

For any domain $\Omega$ in  $\mathbb{R}^{n}$ and any $1\leq p<\infty$
we consider the Lebesgue space of measurable  functions
 with the finite norm
\[
\|f\mid{L_{p}(\Omega)}\|:=\left(\int\limits _{\Omega}|f(x)|^{p}~dx\right)^{1/p}<\infty.
\]
 
The space
$L_{\infty}(\Omega )$  is the space of
essentially
bounded Lebesgue measurable functions with the finite norm 
\[
\|f\mid{L_{\infty}(\Omega)}\|:=\inf\bigl\{ b: \vert f(x)\vert\le b \mbox{ for almost every } x\in\Omega\bigr\}           <\infty.
\]

We define the Sobolev space $W^{1,p}(\Omega)$, $1\leq p<\infty$, 
as a Banach space of  weakly differentiable functions
$f:\Omega\to\mathbb{R}$ equipped with the following norm: 
\[
\|f\mid W^{1,p}(\Omega)\|:=\biggr(\int\limits _{\Omega}|f(x)|^{p}\, dx\biggr)^{\frac{1}{p}}+
\biggr(\int\limits _{\Omega}|\nabla f(x)|^{p}\, dx\biggr)^{\frac{1}{p}}.
\]

We define also the homogeneous seminormed space $L^1_p(\Omega)$
of  weakly differentiable functions $f:\Omega\to\mathbb{R}$ equipped
with the following seminorm: 
\[
\|f\mid L^1_p(\Omega)\|:=\biggr(\int\limits _{\Omega}|\nabla f(x)|^{p}\, dx\biggr)^{\frac{1}{p}}.
\]

We recall that any element of $L^{1}_{p}(\Omega )$ is in $L_{p,\loc}(\Omega)$, that is, the space of  functions  which are locally integrable to the power $p$ in $\Omega$, \cite{M}.

A mapping $\varphi:\Omega\to\mathbb{R}^{n}$ is weakly differentiable on $\Omega$, if its coordinate functions have weak derivatives on $\Omega$. Hence the formal Jacobi
matrix $D\varphi(x)$ and its determinant (Jacobian) $J(x,\varphi)$
are well defined at almost all points $x\in\Omega$. The norm $|D\varphi(x)|$
of the matrix $D\varphi(x)$ is the norm of the corresponding linear
operator. We will use the same notation for this matrix and the corresponding
linear operator.

We recall that a mapping $\varphi: \Omega\to\widetilde{\Omega}$ is called $K$-quasiconformal if $\varphi\in W^{1,n}_{\loc}(\Omega)$ and there exists a constant $K<\infty$ such that
$$
|D\varphi (x)|^n\leq K |J(x,\varphi)|\,\,\,\text{for almost all}\,\,\,x\in \Omega.
$$

A mapping $\varphi:\Omega\to\mathbb{R}^{n}$ possesses the Luzin $N$-property if an image of any set of measure zero has measure zero. Note that any Lipschitz mapping possesses the
Luzin $N$-property.

\subsection{Composition Operators on Lebesgue Spaces}

The following theorem  about composition operators on Lebesgue spaces is well known, we refer to  \cite{VU1}.

\begin{thm}
\label{thm:LpLq} Let a
homeomorphism $\varphi:\Omega\to\widetilde{\Omega}$ between two domains $\Omega$ and $\widetilde{\Omega}$
be weakly differentiable. Then the
composition operator 
\[
\varphi^{\ast}:L_{r}(\widetilde{\Omega})\to L_{s}(\Omega),\,\,\, 1\leq s\leq r<\infty,
\]
 is bounded, if and only if,  $\varphi^{-1}$ possesses the Luzin $N$-property
and 
\begin{gather}
\biggl(\int\limits _{\widetilde{\Omega}}\left|J(y,\varphi^{-1})\right|^{\frac{r}{r-s}}~dy\biggl)^{\frac{r-s}{rs}}=K<\infty, \,\,\,
\mbox{ for } 
1\leq s<r<\infty,\nonumber\\
\left|J(y,\varphi^{-1})\right|^{\frac{1}{s}}=K<\infty, \,\,\,
\mbox{ for } 
1\leq s=r<\infty.
\nonumber
\end{gather}
 The norm of the composition operator  is $\|\varphi^{\ast}\|=K$.
\end{thm}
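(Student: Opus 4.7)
The plan is to reduce the operator bound to a pointwise change-of-variables identity and then apply Hölder's inequality. Because $\varphi$ is a weakly differentiable homeomorphism, it is ACL and differentiable a.e., so the formal Jacobian and its inverse are well defined a.e.; the Luzin $N$-property of $\varphi^{-1}$ will be what allows me to push a Lebesgue integral from $\Omega$ to $\widetilde\Omega$.

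I would start with the necessity of the $N$-property for $\varphi^{-1}$. If there were a set $E\subset\widetilde\Omega$ with $|E|=0$ but $|\varphi^{-1}(E)|>0$, then the indicator $\chi_E$ represents the zero element of $L_r(\widetilde\Omega)$, while $\varphi^{\ast}\chi_E=\chi_{\varphi^{-1}(E)}$ is nonzero in $L_s(\Omega)$, contradicting that $\varphi^{\ast}$ is well-defined on equivalence classes. Once the $N$-property is in hand, the area/change-of-variables formula for Sobolev homeomorphisms yields
\[
\int\limits_{\Omega}|f(\varphi(x))|^{s}\,dx=\int\limits_{\widetilde{\Omega}}|f(y)|^{s}\,|J(y,\varphi^{-1})|\,dy
\]
for every nonnegative Borel $f$.

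Now I split by the two regimes. In the case $s=r$, bounding $|J(y,\varphi^{-1})|$ pointwise by its essential supremum gives $\|\varphi^{\ast}\|\leq \||J(\cdot,\varphi^{-1})|\mid L_{\infty}(\widetilde\Omega)\|^{1/s}$; the reverse inequality follows by testing on indicators of small balls around a Lebesgue point of $|J(\cdot,\varphi^{-1})|$ where its value is arbitrarily close to the essential supremum. In the case $1\leq s<r$, I apply Hölder's inequality with conjugate exponents $r/s$ and $r/(r-s)$ to the right-hand side above, obtaining
\[
\int\limits_{\widetilde{\Omega}}|f(y)|^{s}|J(y,\varphi^{-1})|\,dy\leq \|f\mid L_{r}(\widetilde\Omega)\|^{s}\left(\int\limits_{\widetilde{\Omega}}|J(y,\varphi^{-1})|^{\frac{r}{r-s}}\,dy\right)^{\frac{r-s}{r}},
\]
which after raising to the $1/s$ power gives the stated bound with equality in Hölder's inequality achieved (up to truncation) by the extremal choice $f_{0}(y)=|J(y,\varphi^{-1})|^{1/(r-s)}$, yielding the necessity of the integrability condition and the identification of the operator norm.

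The main obstacle I expect is justifying the change-of-variables identity in the generality stated, since $\varphi$ is only assumed to be a weakly differentiable homeomorphism rather than, say, in $W^{1,n}_{\loc}$. The argument rests on the fact that weakly differentiable homeomorphisms are ACL and a.e.\ differentiable, together with the identity $J(\varphi(x),\varphi^{-1})=1/J(x,\varphi)$ wherever $J(x,\varphi)\neq 0$, so the $N$-property of $\varphi^{-1}$ converts a Lusin-type area formula on $\Omega$ into an integral over $\widetilde\Omega$ against $|J(\cdot,\varphi^{-1})|$; this is where I would lean on the machinery developed in \cite{VU1} rather than reprove it here.
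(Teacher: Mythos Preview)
The paper does not prove this theorem; it is stated as well known with a reference to \cite{VU1}. Your sketch is essentially the standard argument one finds in that literature: deduce the Luzin $N$-property of $\varphi^{-1}$ from well-definedness on equivalence classes, use it to obtain the change-of-variables identity, and then apply H\"older's inequality with exponents $r/s$ and $r/(r-s)$, identifying the operator norm via the extremizer $f_0=|J(\cdot,\varphi^{-1})|^{1/(r-s)}$ (suitably truncated) in the case $s<r$ and via characteristic functions of small balls in the case $s=r$.

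One point worth tightening in a full write-up is the meaning of $|J(y,\varphi^{-1})|$ under the stated hypotheses, since only $\varphi$ is assumed weakly differentiable. The cleanest route is to interpret $|J(\cdot,\varphi^{-1})|$ as the Radon--Nikodym derivative of the pull-back measure $A\mapsto |\varphi^{-1}(A)|$ with respect to Lebesgue measure on $\widetilde\Omega$; this derivative exists precisely because the $N$-property of $\varphi^{-1}$ makes that measure absolutely continuous. With this reading your change-of-variables identity becomes a tautology, and the rest of the argument goes through as you describe. This is also how the issue is handled in \cite{VU1}, so your instinct to lean on that reference at exactly this step is correct.
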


\subsection{Composition Operators on   Sobolev Spaces}

By the standard definition functions of $L^1_p(\Omega)$ are defined only up to a set of measure zero, but they can be redefined quasi-everywhere i.~e. up to a set of $p$-capacity zero. Indeed, every function $u\in L^1_p(\Omega)$ has a unique quasicontinuous representation $\tilde{u}\in L^1_p(\Omega)$. A function $\tilde{u}$ is termed quasicontinuous if for any $\varepsilon >0$ there is an open  set $U_{\varepsilon}$ such that the $p$-capacity of $U_{\varepsilon}$ is less then $\varepsilon$ and on the set $\Omega\setminus U_{\varepsilon}$ the function  $\tilde{u}$ is continuous (see, for example \cite{HKM,M}).

Let $\Omega$ and $\widetilde{\Omega}$ be domains in $\mathbb R^n$. We say that
a homeomorphism $\varphi:\Omega\to\widetilde{\Omega}$ induces a bounded composition
operator 
\[
\varphi^{\ast}:L^1_p(\widetilde{\Omega})\to L^1_q(\Omega),\,\,\,1\leq q\leq p\leq\infty,
\]
by the composition rule $\varphi^{\ast}(f)=f\circ\varphi$, if for
any function $f\in L^1_p(\widetilde{\Omega})$,  the composition $\varphi^{\ast}(f)\in L^1_q(\Omega)$
is defined quasi-everywhere in $\Omega$ and there exists a constant $K_{p,q}(\varphi;\Omega)<\infty$ such that 
\[
\|\varphi^{\ast}(f)\mid L^1_q(\Omega)\|\leq K_{p,q}(\varphi;\Omega)\|f\mid L^1_p(\widetilde{\Omega})\|.
\]

The main result of \cite{U1} gives the analytic description of composition
operators on Sobolev spaces (we also refer to  \cite{VU1}) and asserts that

\begin{thm}
\label{CompTh} \cite{U1} A homeomorphism $\varphi:\Omega\to\widetilde{\Omega}$
between two domains $\Omega$ and $\widetilde{\Omega}$ induces a bounded composition
operator 
\[
\varphi^{\ast}:L^1_p(\widetilde{\Omega})\to L^1_q(\Omega),\,\,\,1\leq q<p<\infty,
\]
 if and only if, $\varphi\in W_{\loc}^{1,1}(\Omega)$, $\varphi$ has finite distortion,
and 
\[
K_{p,q}(\varphi;\Omega)=\biggl(\int\limits _{\Omega}\biggl(\frac{|D\varphi(x)|^{p}}{|J(x,\varphi|}\biggr)^{\frac{q}{p-q}}~dx\biggr)^{\frac{p-q}{pq}}<\infty.
\]
\end{thm}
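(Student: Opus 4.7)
I would treat the two implications separately: the forward direction (``if'') is a H\"older argument after the chain rule, while the converse (``only if'') requires a capacity-plus-set-function argument to extract pointwise information from operator boundedness.

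For the sufficiency, suppose $\varphi\in W^{1,1}_{\loc}(\Omega)$ has finite distortion and $K_{p,q}(\varphi;\Omega)<\infty$. For smooth $f\in L^{1}_p(\widetilde{\Omega})\cap C^{\infty}(\widetilde{\Omega})$, the chain rule for Sobolev mappings of finite distortion gives $\nabla(f\circ\varphi)(x)=D\varphi(x)^{T}\nabla f(\varphi(x))$ for a.e.\ $x\in\Omega$. Since finite distortion makes $D\varphi$ vanish wherever $J(\cdot,\varphi)=0$, that set contributes nothing, and I can factor $|D\varphi|^{q}=(|D\varphi|^{p}/|J|)^{q/p}|J|^{q/p}$ on $\{J>0\}$ and apply H\"older with exponents $p/q$ and $p/(p-q)$ to obtain
\[
\int_{\Omega}|\nabla(f\circ\varphi)|^{q}\,dx\le\biggl(\int_{\Omega}|\nabla f(\varphi(x))|^{p}|J(x,\varphi)|\,dx\biggr)^{q/p}K_{p,q}(\varphi;\Omega)^{q}.
\]
The first factor equals $\|\nabla f\|_{L_{p}(\widetilde{\Omega})}^{q}$ by the area formula for Sobolev homeomorphisms, and smooth approximation followed by passage to a quasicontinuous representative extends the estimate to every $f\in L^{1}_p(\widetilde{\Omega})$, giving $\|\varphi^{*}\|\le K_{p,q}(\varphi;\Omega)$.

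For the necessity, suppose $\varphi^{*}$ is bounded with norm $M$. Testing the operator on functions admissible for a condenser $(F_{0},F_{1})\subset\widetilde{\Omega}$ produces the capacitary inequality
\[
\operatorname{cap}_{q}\bigl(\varphi^{-1}(F_{0}),\varphi^{-1}(F_{1});\Omega\bigr)^{1/q}\le M\,\operatorname{cap}_{p}(F_{0},F_{1};\widetilde{\Omega})^{1/p}.
\]
Specializing to pairs of concentric balls and letting the inner ball shrink, as in the Vodop'yanov--Ukhlov framework \cite{VU1}, this forces $\varphi^{-1}$ to preserve $p$-capacity null sets, gives the ACL property of $\varphi$ (hence $\varphi\in W^{1,1}_{\loc}$), and shows $\varphi$ has finite distortion. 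For the sharp integral bound I would introduce the set function
\[
\Phi(A):=\|\varphi^{*}\|_{L^{1}_{p}(A)\to L^{1}_{q}(\varphi^{-1}(A))}^{pq/(p-q)}
\]
on open $A\subset\widetilde{\Omega}$. Superadditivity on disjoint unions together with the uniform bound $\Phi(A)\le M^{pq/(p-q)}$ makes $\Phi$ extend to a countably additive finite Borel measure on $\widetilde{\Omega}$, absolutely continuous with respect to Lebesgue measure. A Lebesgue differentiation argument combined with the sufficiency direction applied locally identifies the density of $\Phi$, pulled back to $\Omega$ via the area formula, with the pointwise distortion ratio $(|D\varphi|^{p}/|J(\cdot,\varphi)|)^{q/(p-q)}$. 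Integrating yields $K_{p,q}(\varphi;\Omega)\le M$.

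The \emph{main obstacle} is this last step: converting the global operator norm into a pointwise estimate on the distortion ratio. It requires the superadditivity of $\Phi$ (which is not formal, since operator norms are not additive set functions in general), a Lebesgue differentiation argument adapted to this sup-defined measure, and care with null sets where Luzin-type properties or the area formula could fail. The quasicontinuous representative of $f$ is indispensable throughout, since $f\circ\varphi$ is only pointwise meaningful once $f$ has been refined on a set of $p$-capacity zero in $\widetilde{\Omega}$.
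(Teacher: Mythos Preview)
The paper does not prove this theorem at all: Theorem~\ref{CompTh} is stated with an explicit citation to \cite{U1} and is used as a black box in the proof of Lemma~\ref{lem:composition} and Theorem~B. There is therefore no ``paper's own proof'' to compare against.

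That said, your outline is the standard route taken in the cited literature \cite{U1,VU1}. The sufficiency direction via the chain rule, the factorization $|D\varphi|^{q}=(|D\varphi|^{p}/|J|)^{q/p}|J|^{q/p}$, H\"older with exponents $p/q$ and $p/(p-q)$, and the area formula is exactly how the forward implication goes. For the necessity, the capacitary inequality and the countably-superadditive set function $\Phi$ built from localized operator norms is precisely the Ukhlov--Vodop'yanov mechanism; you have correctly identified that the delicate point is proving superadditivity of $\Phi$ and then differentiating it to recover the pointwise distortion density. One small correction: $\Phi$ is only countably \emph{super}additive, not countably additive, so it does not become a genuine Borel measure; rather, superadditivity together with the global bound $\Phi(\widetilde{\Omega})\le M^{pq/(p-q)}$ is enough to guarantee that the upper volume derivative $\Phi'(y)$ exists and is integrable a.e., and it is this derivative that one identifies with the distortion integrand. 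With that adjustment your plan matches the argument in \cite{U1}.
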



We prove the following property of quasiconformal homeomorphisms:
\begin{lem}
\label{lem:composition}Let $\varphi: \Omega\to\widetilde{\Omega}$ be a $K$-quasiconformal homeomorphism. Then $\varphi$ generates by the composition rule $\varphi^{\ast}(f)=f\circ\varphi$ a bounded composition operator
$$
\varphi^{\ast}: L^1_p(\widetilde{\Omega})\to L^1_q(\Omega), \,\,\,1\leq q<p<\infty,
$$ 
if and only if, 
$$
Q_{p,q}(\Omega)=\left(\int\limits_{\Omega}|D\varphi|^{\frac{(p-n)q}{p-q}}~dx\right)^{\frac{p-q}{pq}}<\infty.
$$
\end{lem}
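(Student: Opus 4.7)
The plan is to reduce this statement to Theorem \ref{CompTh} by showing that for a $K$-quasiconformal homeomorphism $\varphi$, the distortion quantity $K_{p,q}(\varphi;\Omega)$ appearing there is comparable, up to constants depending only on $K$ and $p$, to the integral $Q_{p,q}(\Omega)$. First I would verify that the remaining hypotheses of Theorem \ref{CompTh} hold automatically: any $K$-quasiconformal map belongs to $W^{1,n}_{\loc}(\Omega)$ and hence to $W^{1,1}_{\loc}(\Omega)$, and the defining inequality $|D\varphi(x)|^n\le K|J(x,\varphi)|$ forces $D\varphi(x)=0$ almost everywhere on $\{J(\cdot,\varphi)=0\}$, so $\varphi$ automatically has finite distortion.

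The heart of the argument is a two-sided pointwise comparison between $|D\varphi(x)|^p/|J(x,\varphi)|$ and $|D\varphi(x)|^{p-n}$. The upper bound is an immediate consequence of quasiconformality: $|J(x,\varphi)|\ge K^{-1}|D\varphi(x)|^n$ yields
$$
\frac{|D\varphi(x)|^p}{|J(x,\varphi)|}\le K\,|D\varphi(x)|^{p-n}.
$$
For the lower bound I would invoke the elementary matrix estimate $|J(x,\varphi)|\le|D\varphi(x)|^n$ (the determinant is dominated by the $n$-th power of the operator norm, since it is the product of the singular values), which gives
$$
\frac{|D\varphi(x)|^p}{|J(x,\varphi)|}\ge|D\varphi(x)|^{p-n}.
$$
Raising both inequalities to the positive exponent $q/(p-q)$, integrating over $\Omega$, and taking the $(p-q)/(pq)$-th root produces
$$
Q_{p,q}(\Omega)\le K_{p,q}(\varphi;\Omega)\le K^{1/p}\,Q_{p,q}(\Omega),
$$
so the two quantities are simultaneously finite, and Theorem \ref{CompTh} then yields the equivalence claimed by the lemma.

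The only point requiring care is the degeneracy set $E=\{x:D\varphi(x)=0\}=\{x:J(x,\varphi)=0\}$, where the composition-operator ratio and, in the regime $p<n$, the power $|D\varphi(x)|^{(p-n)q/(p-q)}$ are not literally defined. With the standard convention that the finite-distortion ratio vanishes on $E$, and using the classical fact that a nondegenerate $K$-quasiconformal homeomorphism satisfies $|D\varphi|>0$ almost everywhere, the set $E$ has Lebesgue measure zero and contributes nothing to any of the integrals above. This measure-theoretic bookkeeping is the only non-algebraic ingredient, and once it is dispatched the proof is a direct substitution into Theorem \ref{CompTh}.
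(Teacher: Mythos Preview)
Your proposal is correct and follows essentially the same approach as the paper: both directions reduce to Theorem~\ref{CompTh} via the pointwise comparison $|D\varphi|^{p-n}\le |D\varphi|^p/|J(\cdot,\varphi)|\le K\,|D\varphi|^{p-n}$, the upper bound coming from quasiconformality and the lower from the Hadamard inequality. Your presentation is slightly more streamlined (doing the two-sided pointwise estimate first and then integrating once), and you add explicit verification of the auxiliary hypotheses of Theorem~\ref{CompTh} together with the observation that the degeneracy set has measure zero; the paper handles the forward direction via a smooth-function approximation step, but the substance is identical.
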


\begin{proof}
Let $f\in L^1_p(\widetilde{\Omega})$ be a smooth function. Then, because quasiconformal homeomorphisms possess the Luzin $N$ and $N^{-1}$ properties, the composition 
$g=\varphi^{\ast}(f)$ is well defined almost everywhere in $\Omega$ and belongs to $L^1_{1,\loc}(\Omega)$. Hence, using Theorem \ref{CompTh} we obtain
\begin{multline*}
\|g\mid L^1_{q}({\Omega})\|\leq
\biggl(\int\limits _{{\Omega}}\left(\frac{|D\varphi(x)|^p}{|J(x,\varphi)|}\right)^{\frac{q}{p-q}}~dx\biggr)^{\frac{p-q}{pq}}
\|f\mid L^1_{p}(\widetilde{\Omega})\|\\
=\biggl(\int\limits _{{\Omega}}\left(\frac{|D\varphi(x)|^n|D\varphi(x)|^{p-n}}{|J(x,\varphi)|}\right)^{\frac{q}{p-q}}~dx\biggr)^{\frac{p-q}{pq}}\|f\mid L^1_{p}(\widetilde{\Omega})\|\\
\leq K^{\frac{1}{p}} Q_{p,q}(\Omega)\|f\mid L^1_p(\widetilde{\Omega})\|.
\end{multline*}
By approximating an arbitrary function $f\in L^1_p(\widetilde{\Omega})$
by smooth functions we obtain the required inequality. 

Now, let the composition operator
$$
\varphi^*: L_p^1( \widetilde{\Omega}) \to L_q^1(\Omega),\,\,1\leq q<p<\infty,
$$
be bounded.
Then, using the  Hadamard inequality: 
$$
|J(x,\varphi)|\leq |D\varphi(x)|^n\,\,\text{for almost all}\,\, x\in\Omega,
$$ 
and Theorem~\ref{CompTh}, we have
$$
Q_{p,q}(\Omega)=\left(\int\limits_{\Omega}|D\varphi|^{\frac{(p-n)q}{p-q}}~dx\right)^{\frac{p-q}{pq}}\leq 
\left(\int\limits_\Omega \left(\frac{|D\varphi(x)|^{p}}{|J(x,\varphi)|}\right)^\frac{q}{p-q}~dx\right)^{\frac{p-q}{pq}}<+\infty. 
$$

\end{proof}

\begin{cor}
Let $\varphi: \Omega\to\widetilde{\Omega}$ be a $K$-quasiconformal homeomorphism such that 
$$
Q_{p,q}(\Omega)=\left(\int\limits_{\Omega}|D\varphi|^{\frac{(p-n)q}{p-q}}~dx\right)^{\frac{p-q}{pq}}<\infty.
$$
Then  
$$
\|\varphi^{\ast}f\mid L^1_q(\Omega)\|\leq K^{\frac{1}{p}}Q_{p,q}(\Omega)\|f\mid L^1_p(\widetilde{\Omega})\|\,\,\,\text{for any}\,\,\, f\in L^1_p(\widetilde{\Omega}).
$$
\end{cor}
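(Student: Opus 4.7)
The plan is to observe that this corollary is just the explicit quantitative extraction of the bound already produced in the proof of Lemma~\ref{lem:composition}; the work is essentially done, and all that remains is to check that the inequality, derived there for smooth $f$, extends to every $f\in L^1_p(\widetilde{\Omega})$ by approximation.

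First I would invoke Theorem~\ref{CompTh} for smooth $f\in L^1_p(\widetilde{\Omega})$: writing $g=\varphi^{\ast}f=f\circ\varphi$, which is well defined almost everywhere because $\varphi$ and $\varphi^{-1}$ enjoy the Luzin $N$-property, we get
$$
\|g\mid L^1_q(\Omega)\|\leq\biggl(\int\limits_{\Omega}\Bigl(\tfrac{|D\varphi(x)|^p}{|J(x,\varphi)|}\Bigr)^{\frac{q}{p-q}}dx\biggr)^{\frac{p-q}{pq}}\|f\mid L^1_p(\widetilde{\Omega})\|.
$$
The next step factors $|D\varphi|^p=|D\varphi|^n\cdot|D\varphi|^{p-n}$ inside the integrand and invokes the $K$-quasiconformality inequality $|D\varphi(x)|^n\leq K|J(x,\varphi)|$ almost everywhere, which pulls out a factor of $K^{1/p}$ and leaves the remaining integral equal to $Q_{p,q}(\Omega)^{q/(p-q)\cdot(p-q)/q}=Q_{p,q}(\Omega)$. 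This yields
$$
\|\varphi^{\ast}f\mid L^1_q(\Omega)\|\leq K^{\frac{1}{p}}Q_{p,q}(\Omega)\|f\mid L^1_p(\widetilde{\Omega})\|
$$
for every smooth $f$.

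The only point needing care is the extension to arbitrary $f\in L^1_p(\widetilde{\Omega})$. I would approximate such an $f$ by a sequence of smooth functions $f_k$ converging to $f$ in $L^1_p(\widetilde{\Omega})$ and quasi-everywhere (using the quasicontinuous representatives discussed above). Because $\varphi$ is quasiconformal, it transforms sets of $p$-capacity zero in $\widetilde{\Omega}$ into sets of $q$-capacity zero in $\Omega$, so $\varphi^{\ast}f_k\to\varphi^{\ast}f$ quasi-everywhere in $\Omega$; combined with the uniform bound above applied to the Cauchy differences $f_k-f_j$, this shows $\{\varphi^{\ast}f_k\}$ is Cauchy in $L^1_q(\Omega)$ with limit $\varphi^{\ast}f$. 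Passing to the limit preserves the inequality, completing the proof.

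I do not expect a genuine obstacle here. The analytic heart of the argument, namely the Hadamard-type rearrangement of $|D\varphi|^p/|J(x,\varphi)|$ using the quasiconformal distortion bound, has already been carried out inside Lemma~\ref{lem:composition}; the corollary merely packages the inequality as a stand-alone norm estimate, and the only mildly technical ingredient is the standard quasicontinuous approximation argument to handle general $f\in L^1_p(\widetilde{\Omega})$.
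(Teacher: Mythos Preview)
Your proposal is correct and follows exactly the paper's approach: the corollary is stated without its own proof because the quantitative bound $\|\varphi^{\ast}f\mid L^1_q(\Omega)\|\leq K^{1/p}Q_{p,q}(\Omega)\|f\mid L^1_p(\widetilde{\Omega})\|$ is precisely what was derived inside the proof of Lemma~\ref{lem:composition} (first for smooth $f$, then extended by approximation). Your additional remarks on quasicontinuous representatives and capacity are a reasonable fleshing-out of the one-line approximation step the paper invokes.
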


\subsection{Sobolev-Poincar\'e inequalities}

Let $1\leq r,p\leq \infty$. 
We recall that
a bounded domain $\Omega$ in $\mathbb R^n$ is called a $(r,p)$-Sobolev-Poincar\'e domain, if for any function $f\in L^1_p(\Omega)$, the $(r,p)$-Sobolev-Poincar\'e inequality
$$
\inf\limits_{c\in\mathbb R}\|f-c\mid L_r(\Omega)\|\leq B_{r,p}(\Omega )\|\nabla f\mid L_p(\Omega)\|
$$
holds.

\vskip 0.5cm

{\bf Theorem B.}
{\it Let a bounded domain $\Omega$ in $\mathbb R^n$ be a $(r,q)$-Sobolev-Poincar\'e domain, $1<q\leq r<\infty$. Suppose that  there exists a $K$-quasiconformal homeomorphism $\varphi: \Omega\to\widetilde{\Omega}$ of a domain $\Omega$ onto a bounded domain $\widetilde{\Omega}$,
such that  $\varphi$   belongs to the Sobolev space $L^1_{\alpha}(\Omega)$ for some $\alpha>n$. Suppose additionally that
$$
Q_{p,q}(\Omega)=\left(\int\limits_{\Omega}|D\varphi|^{\frac{(p-n)q}{p-q}}~dx\right)^{\frac{p-q}{pq}}<\infty.
$$
for some $p\geq q$.
Then for $1\leq s=\frac{\alpha-n}{\alpha}r$ in the domain $\widetilde{\Omega}$ the $(s,p)$-Sobolev-Poincar\'e inequality
holds and 
$$
B_{s,p}(\widetilde{\Omega})\leq K^{\frac{1}{p}}
\min\limits_{1\leq q<p}\left( Q_{p,q}(\Omega)\|D\varphi\mid L_{\alpha}(\Omega)\|^{\frac{n}{s}}\right)
\cdot B_{r,q}(\Omega),
$$
where $B_{r,q}(\Omega)$ is the best constant in the $(r,q)$-Sobolev-Poincar\'e inequality in the domain $\Omega$.
}

\vskip 0.5cm

\begin{proof}
By the assumptions there exists a quasiconformal homeomorphism $\varphi: \Omega\to \widetilde{\Omega}$. Then, using the change of variable formula we obtain: 

$$
\inf\limits_{c\in \mathbb R}\biggl(\int\limits_{\widetilde{\Omega}}|f(y)-c|^s~dy\biggr)^{\frac{1}{s}}
=
\inf\limits_{c\in \mathbb R}\biggl(\int\limits_{\Omega}|f(\varphi(x))-c|^s|J(x,\varphi)|~dx\biggr)^{\frac{1}{s}}.
$$

Now we choose $r=\alpha s/(\alpha-n)$. Then, using the H\"older inequality we have:

\begin{multline}
\inf\limits_{c\in \mathbb R}\biggl(\int\limits_{\Omega}|f(\varphi(x))-c|^s|J(x,\varphi)|~dx\biggr)^{\frac{1}{s}}\\
\leq
\biggl(\int\limits_{\Omega}|J(x,\varphi)|^{\frac{r}{r-s}}~dx\biggr)^{\frac{r-s}{rs}}
\inf\limits_{c\in \mathbb R}\biggl(\int\limits_{\Omega}|g(x)-c|^r~dx\biggr)^{\frac{1}{r}}
\\
\leq
\biggl(\int\limits_{\Omega} |D\varphi(x)|^{\frac{nr}{r-s}}~dx\biggr)^{\frac{r-s}{rs}}
\inf\limits_{c\in \mathbb R}\biggl(\int\limits_{\Omega}|g(x)-c|^r~dx\biggr)^{\frac{1}{r}}\\
=
\biggl(\int\limits_{\Omega} |D\varphi(x)|^{\alpha}~dx\biggr)^{\frac{1}{\alpha}\frac{n}{s}}
\inf\limits_{c\in \mathbb R}\biggl(\int\limits_{\Omega}|g(x)-c|^r~dx\biggr)^{\frac{1}{r}}.
\nonumber
\end{multline}

Hence, applying the Sobolev-Poincar\'e inequality in the $(r,q)$-Sobolev-Poincar\'e domain $\Omega$
$$
\inf\limits_{c\in \mathbb R}\biggl(\int\limits_{\Omega}|g(x)-c|^r~dx\biggr)^{\frac{1}{r}}\leq B_{r,q}(\Omega)\biggl(\int\limits_{\Omega}|\nabla g(x|^q~dx\biggr)^{\frac{1}{q}}
$$
we have
$$
\inf\limits_{c\in \mathbb R}\biggl(\int\limits_{\Omega}|f(y)-c|^s~dy\biggr)^{\frac{1}{s}}\leq
\|D\varphi\mid L_{\alpha}(\Omega)\|^{\frac{n}{s}} B_{r,q}(\Omega) \|g\mid L^1_q(\Omega)\|.
$$
By Lemma \ref{lem:composition} we have
$$
\|g\mid L^1_q(\Omega)\| \leq K^{\frac{1}{p}}Q_{p,q}(\Omega)\|f\mid L^1_p(\tilde{\Omega})\|.
$$

Therefore
$$
\inf\limits_{c\in \mathbb R}\biggl(\int\limits_{\Omega}|f(y)-c|^s~dy\biggr)^{\frac{1}{s}}\leq K^{\frac{1}{p}}Q_{p,q}(\Omega)\|D\varphi\mid L_{\alpha}(\Omega)\|^{\frac{n}{s}}
 B_{r,q}(\Omega)\biggl(\int\limits_{\widetilde{\Omega}}|\nabla f|^p~dy\biggr)^{\frac{1}{p}}.
$$

\end{proof}

As an application we consider the Neumann spectral problem for the $p$-Laplace operator
 
\begin{gather}
\begin{cases}
-\dv(|\nabla u|^{p-2}\nabla u)=\mu_p |u|^{p-2}u\,\,\text{in}\,\,\Omega,\\
{\frac{\partial u}{\partial n}}\bigg|_{\partial\Omega}=0.
\end{cases}
\nonumber
\end{gather}

By the generalized version of Rellich-Kondrachov compactness theorem (see, for example, \cite{M}, \cite{HK}, \cite{EH-S}) and the $(r,p)$--Sobolev-Poincar\'e inequality for $r>p$ 
the embedding operator
$$
i: W^{1,p}(\Omega) \hookrightarrow L_p(\Omega)
$$
is compact in domains which satisfy conditions of Theorem B.

Hence, the first nontrivial Neumann eigenvalue $\mu_p(\Omega)$ can be characterized as
$$
\mu_p(\Omega)=\min\left\{\frac{\int\limits_{\Omega}|\nabla u(x)|^p~dx}{\int\limits_{\Omega}|u(x)|^p~dx}: u\in W^{1,p}(\Omega)\setminus\{0\},\,
\int\limits_{\Omega}|u|^{p-2}u~dx=0\right\}.
$$

Moreover, $\mu_p(\Omega)^{-\frac{1}{p}}$ is the best constant $B_{p,p}(\Omega)$ (for example we refer to \cite{BCT15}) in the following Poincar\'e inequality
$$
\inf\limits_{c\in\mathbb R}\|f-c \mid L^p(\Omega)\|\leq B_{p,p}(\Omega)\|\nabla f \mid L^p(\Omega)\|,\,\,\, f\in W^{1,p}(\Omega).
$$

So from Theorem B in the case $s=p$ we obtain

\begin{thm}
Let a bounded domain $\Omega$ in $\mathbb R^n$ be a $(r,q)$-Sobolev-Poincar\'e domain, $1<q\leq r<\infty$. Assume that there exists a $K$-quasiconformal homeomorphism $\varphi: \Omega\to\widetilde{\Omega}$ of a domain $\Omega$ onto a bounded domain $\widetilde{\Omega}$, so that $\varphi$ belongs to the
 space $L^1_{\alpha}(\Omega)$ for $\alpha=nr/(r-p)$,  $r>p$.  Suppose that
$$
Q_{p,q}(\Omega)=\left(\int\limits_{\Omega}|D\varphi|^{\frac{(p-n)q}{p-q}}~dx\right)^{\frac{p-q}{pq}}<\infty.
$$
for some $p>q$.
Then 
$$
\frac{1}{\mu_p(\widetilde{\Omega})}\leq K
\min\limits_{1\leq q<p}\left( Q^p_{p,q}(\Omega)\|D\varphi\mid L_{\alpha}(\Omega)\|^{n}\right)
\cdot B^p_{r,q}(\Omega),
$$
where $B_{r,q}(\Omega)$ is the best constant in the $(r,q)$-Sobolev-Poincar\'e inequality in the domain $\Omega$.
\end{thm}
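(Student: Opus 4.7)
The plan is to deduce this theorem as the specialization $s=p$ of Theorem B, combined with the Rayleigh-quotient identification $1/\mu_p(\widetilde{\Omega}) = B_{p,p}(\widetilde{\Omega})^p$ recalled immediately before the statement.

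First I would verify the parameter compatibility. The relation $s = \frac{\alpha-n}{\alpha}r$ appearing in the conclusion of Theorem B reduces to $s=p$ precisely when $\alpha(r-p) = nr$, i.e.\ $\alpha = nr/(r-p)$, which is the integrability exponent assumed here. All other hypotheses of Theorem B (quasiconformality, the finiteness of $Q_{p,q}(\Omega)$, the $(r,q)$-Sobolev-Poincar\'e property of $\Omega$) transfer verbatim from the present assumptions.

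Applying Theorem B then gives that $\widetilde{\Omega}$ is a $(p,p)$-Sobolev-Poincar\'e domain with
$$
B_{p,p}(\widetilde{\Omega}) \leq K^{1/p} \min_{1\leq q<p}\Bigl( Q_{p,q}(\Omega)\,\|D\varphi \mid L_{\alpha}(\Omega)\|^{n/p} \Bigr) B_{r,q}(\Omega).
$$
Raising both sides to the $p$-th power turns $K^{1/p}$ into $K$, the factor $Q_{p,q}(\Omega)$ into $Q^p_{p,q}(\Omega)$, the exponent $n/p$ into $n$, and $B_{r,q}(\Omega)$ into $B^p_{r,q}(\Omega)$, yielding precisely the right-hand side of the asserted bound. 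The substitution $B_{p,p}(\widetilde{\Omega})^p = 1/\mu_p(\widetilde{\Omega})$ then closes the argument in one line.

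The only non-routine point is ensuring that the variational description of $\mu_p(\widetilde{\Omega})$ as a minimum (not merely an infimum) is actually available, which requires the compactness of the embedding $W^{1,p}(\widetilde{\Omega}) \hookrightarrow L_p(\widetilde{\Omega})$. This compactness is supplied by the generalized Rellich-Kondrachov theorem once one knows that $\widetilde{\Omega}$ supports an $(s',p)$-Sobolev-Poincar\'e inequality with some $s'>p$, a regime reached by invoking Theorem B with an integrability exponent strictly larger than $nr/(r-p)$; this is exactly the observation recorded just before the theorem. With that compactness in hand, the rest of the proof is the bookkeeping described above.
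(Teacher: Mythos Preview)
Your proposal is correct and follows exactly the paper's own route: the theorem is presented in the paper as the specialization $s=p$ of Theorem~B (which forces $\alpha = nr/(r-p)$), combined with the identification $B_{p,p}(\widetilde{\Omega})^p = 1/\mu_p(\widetilde{\Omega})$ and the compactness remark recorded just before the statement. Your discussion of the parameter compatibility and of the Rellich--Kondrachov step is in fact more detailed than what the paper writes.
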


In the limit case, when a quasiconformal mapping $\varphi:\Omega\to\widetilde{\Omega}$ is Lipschitz homeomorphism, we have: 

\vskip 0.5cm

{\bf Theorem A.} {\it  
Let a bounded domain $\Omega$ in $\mathbb R^n$ be a $(p,p)$-Sobolev-Poincar\'e domain, $1<p<\infty$, and there exists a Lipschitz $K$-quasiconformal homeomorphism $\varphi: \Omega\to\widetilde{\Omega}$ of a domain $\Omega$ onto a bounded domain $\widetilde{\Omega}$ such that 
$$
Q_{p}(\Omega)=\ess\sup\limits_{x\in\Omega}|D\varphi(x)|^{\frac{p-n}{p}}<\infty.
$$
Then 
$$
\frac{1}{\mu_p(\widetilde{\Omega})}\leq K
Q^p_{p}(\Omega)\||D\varphi|^n\mid L_{\infty}(\Omega)\|
\cdot \frac{1}{\mu_p(\Omega)}.
$$
}
\vskip 0.5cm

Let us give an illustration of Theorem A.

\vskip 0.5cm

{\bf Example C.}   
Consider the domain $\Omega_{\delta}=\Omega_1\cup\Omega_2$, $\delta>0$ given, 
$\alpha=\delta (\sqrt{3}-1)/2$, where 
\begin{equation*}
\Omega_1=\bigl\{(x',x_n)\in R^n\,: \max\{\vert x'\vert -\delta\,, -\alpha\} < x_n<\alpha\bigr\}
\end{equation*}
and
\begin{equation*}
\Omega_2=\bigl\{(x',x_n)\in R^n\,:  -\alpha < x_n<\min\{\delta -\vert x'\vert\,, \alpha\}\bigr\}\,.
\end{equation*}
Let $n=3$. 
Then, $\Omega_{\delta}=\Omega_1\cup\Omega_2$ is a $(\delta (\sqrt{3}-1)/2\,, \delta\sqrt{2}) $-John domain 
and there exists a $K$-quasiconformal mapping $\varphi:\mathbb R^3\to \mathbb R^3$ such that $\varphi(B^3(0,1))=\Omega_{\delta}$.

The domain 
$\Omega_{\delta}$ is starshaped with respect to the origin.
By \cite{GV}  with the angle $\alpha =\pi /12$ we obtain
\begin{equation*}
K^2\le2\frac{\sqrt{4+\sqrt{6}+\sqrt{2}}}{4-\sqrt{6}-\sqrt{2}} 
\end{equation*}
and
\begin{equation*}
\vert D\varphi (x)\vert^3\le
\delta^32^3\biggl(\frac{\sqrt{4+\sqrt{6}-\sqrt{2}}+\sqrt{4-\sqrt{6}+\sqrt{2}}}
{\sqrt{6}-\sqrt{2}}\biggr)^3\,.
\end{equation*}

By Theorem A for $p>3$
\begin{multline*}
\frac{1}{\mu_p(\Omega_{\delta})}\leq
\frac{\sqrt{2}(4+\sqrt{6}+\sqrt{2}))^{1/4}}
{\sqrt{4-\sqrt{6}-\sqrt{2}}}
\biggl(\frac{\sqrt{4+\sqrt{6}-\sqrt{2}}+\sqrt{4-\sqrt{6}+\sqrt{2}}}{(\sqrt{6}-\sqrt{2})
(\mu_p(B^3(0,1))^{1/p}
}2\delta\biggr)^p.
\end{multline*}
If $p=2$ then the first non-trivial Neumann eigenvalue in the unit ball  is
$$
\mu_2(B^n(0,1))=p_{n/2},
$$ 
where $p_{n/2}$ denotes the first positive zero of the function $(t^{1-n/2}J_{n/2}(t))'$. In particular, if $n=2$, we have $p_1=j_{1,1}'\approx1.84118$ where $j_{1,1}'$ denotes the first positive zero of the derivative of the Bessel function $J_1$.  And $p_{3/2}$ denotes the first positive zero of the function $(t^{1/2}J_{3/2}(t))'$.

If $p>2$, then by \cite{ENT}
$$
\mu_p(B^n(0,1)) \geq \left(\frac{\pi_p}{2}\right)^p
$$
where
$$
\pi_p = 2 \int\limits_0^{(p-1)^{\frac{1}{p}}} \frac{dt}{(1-t^p/(p-1))^{\frac{1}{p}}} =
2 \pi \frac{(p-1)^{\frac{1}{p}}}{p \sin(\pi/p)}.
$$

\section{Poincar\'e inequalities for Whitney complexes}

The aim of  this section is to study  the upper estimates of the Poincar\'e constants  $B_{p,p}(W)$ 
for the $(p,p)$-Poincar\'e-Sobolev inequalities: 
$$
\inf\limits_{c\in\mathbb R}\|f-c\mid L_p(W)\|\leq B_{p,p}(W)\|\nabla f\mid L_p(W)\|
$$
for functions $f$ of the space $L^{1}_{p}(W)$ defined in the fractal type domains what we call the \textit{Whitney complex 
$W$ in $\mathbb R^n$. }

\begin{lem}\label{lemma_1}\cite{H}
Let $1\le p <\infty$.
Let $A$ be a measurable subset of a domain $\Omega$ in $\mathbb R^n$ such that $\vert A\vert >0$ and let $f$ be in $L^p(\Omega)$. Then for each $c\in \mathbb R$
\begin{equation*}
\|f-f_A\mid L_p(\Omega)\|
\leq 
2\biggl(\frac{\vert \Omega\vert}{\vert A\vert}\biggr)^{1/p}
\|f-c\mid L_p(W)\|\,.
\end{equation*}
\end{lem}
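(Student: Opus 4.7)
The plan is to prove the inequality by the triangle inequality together with a Hölder-type estimate for the mean value $f_A$. Writing $f - f_A = (f - c) - (f_A - c)$, I split the $L_p(\Omega)$-norm as
$$
\|f - f_A \mid L_p(\Omega)\| \;\leq\; \|f - c \mid L_p(\Omega)\| \;+\; \|f_A - c \mid L_p(\Omega)\|,
$$
and the first term is already of the desired form since $|\Omega|/|A|\geq 1$, so it gets absorbed into the factor $2(|\Omega|/|A|)^{1/p}$.

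The work therefore reduces to estimating the second term. Here $f_A - c$ is a constant, so its $L_p(\Omega)$-norm equals $|f_A - c|\cdot |\Omega|^{1/p}$. To control $|f_A - c|$, I would use that $c = (c)_A$ (the mean of the constant function $c$), so
$$
|f_A - c| \;=\; \left|\frac{1}{|A|}\int_A (f(x)-c)\,dx\right| \;\leq\; \frac{1}{|A|}\int_A |f(x)-c|\,dx.
$$
Applying Hölder's inequality on $A$ with exponents $p$ and $p/(p-1)$ (or trivially if $p=1$) gives
$$
\frac{1}{|A|}\int_A |f(x)-c|\,dx \;\leq\; \frac{|A|^{1-1/p}}{|A|}\,\|f-c\mid L_p(A)\| \;=\; |A|^{-1/p}\|f-c\mid L_p(A)\|.
$$

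Combining the two estimates, and using $\|f-c\mid L_p(A)\|\leq \|f-c\mid L_p(\Omega)\|$, yields
$$
\|f_A - c \mid L_p(\Omega)\| \;\leq\; \left(\frac{|\Omega|}{|A|}\right)^{1/p}\|f-c\mid L_p(\Omega)\|,
$$
and plugging this back into the triangle inequality produces the factor $1 + (|\Omega|/|A|)^{1/p} \leq 2(|\Omega|/|A|)^{1/p}$, which is exactly the claimed bound.

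There is no serious obstacle in this proof; it is essentially a two-line argument. The only thing to be careful about is the case $p=1$, where Hölder's inequality degenerates but the estimate $\int_A |f-c|\,dx \leq \|f-c\mid L_1(A)\|$ is of course immediate, so the same conclusion holds.
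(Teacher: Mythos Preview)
Your argument is correct and is precisely the standard proof of this elementary lemma: split via the triangle inequality, handle the constant term $f_A-c$ by averaging and H\"older, and absorb the $1$ into $(|\Omega|/|A|)^{1/p}\geq 1$ to get the factor $2$. There is nothing to add; the case $p=1$ is indeed trivial as you note.

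As for comparison with the paper: the paper does not actually supply a proof of this lemma. It is stated with a citation to \cite{H} (Hurri-Syrj\"anen's dissertation) and used as a black box in the proofs of Lemmas~\ref{lemma_2} and~\ref{lemma_3}. Your write-up therefore fills in what the paper omits, and is exactly the argument one finds in the cited source. (Incidentally, the ``$L_p(W)$'' on the right-hand side of the displayed inequality is evidently a typographical slip for $L_p(\Omega)$, and you have correctly read it that way.)
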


\begin{lem}\label{lemma_2}{H}
Suppose that
\begin{equation*}
\|f-f_{Q_j}\mid L_p(Q_j)\|
\leq 
B_{p,p}(Q_j)
\|\nabla f\mid L_p(Q_j)\|\,,
\end{equation*}
where $B_{p,p}(Q_j)$ is the Poincar\'e constant of the domain $Q_j$, $j=1,2$,
and $\vert Q_1\cap Q_2\vert\neq \emptyset$. Then,
$$
\int_{Q_1\cup Q_2}
\vert f(x)-f_{Q_1\cup Q_2}\vert^p\,dx\leq \frac{2^{2p-1}}{\vert Q_1\cap Q_2\vert}
\sum_{j=1}^{2}\vert Q_j\vert B^p_{p,p}(Q_j)
\int_{Q_j}
\vert \nabla f(x)\vert^p\,dx\,.
$$

\end{lem}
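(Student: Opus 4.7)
The plan is to replace the mean $f_{Q_1\cup Q_2}$ by the average $f_A$ over the intersection $A:=Q_1\cap Q_2$, so that the two cubes $Q_j$ are put on equal footing; the positive measure of $A$ (the content of the hypothesis) is exactly what makes this possible.

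First, I would apply the convexity inequality $|a-b|^p\le 2^{p-1}(|a|^p+|b|^p)$ with $a=f-f_A$ and $b=f_{Q_1\cup Q_2}-f_A$, and then use Jensen's inequality to dominate $|f_A-f_{Q_1\cup Q_2}|^p|Q_1\cup Q_2|$ by $\int_{Q_1\cup Q_2}|f-f_A|^p\,dx$. This yields a first-stage estimate of the form
\begin{equation*}
\int_{Q_1\cup Q_2}|f-f_{Q_1\cup Q_2}|^p\,dx \;\le\; 2^{p}\int_{Q_1\cup Q_2}|f-f_A|^p\,dx.
\end{equation*}

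Second, I would split $\int_{Q_1\cup Q_2}\le\int_{Q_1}+\int_{Q_2}$ and, on each $Q_j$, apply Lemma~\ref{lemma_1} to the subset $A\subset Q_j$ with constant $c=f_{Q_j}$, which produces
\begin{equation*}
\int_{Q_j}|f-f_A|^p\,dx \;\le\; 2^{p}\,\frac{|Q_j|}{|A|}\int_{Q_j}|f-f_{Q_j}|^p\,dx.
\end{equation*}
Then the hypothesized local Poincar\'e inequality on $Q_j$ replaces the right-hand side by $B^p_{p,p}(Q_j)\int_{Q_j}|\nabla f|^p\,dx$, and combining these two steps with the first stage gives an estimate of the stated form.

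Chaining the three steps naively yields a leading constant $2^{2p}$; trimming this last factor of $2$ down to the stated $2^{2p-1}$ is the only delicate bookkeeping, and can be carried out either by sharpening the first convexity bound (for instance by absorbing the $|f_A-f_{Q_1\cup Q_2}|$ term using that $f_A$ is already close to the $L^p$-barycenter of $f$ over $Q_1\cup Q_2$) or by exploiting that the crude splitting $\int_{Q_1\cup Q_2}\le\int_{Q_1}+\int_{Q_2}$ double-counts the mass on $A$. Apart from this constant-tracking, I foresee no substantive difficulty: the structural idea is simply passage through the intersection mean $f_A$, which both Lemma~\ref{lemma_1} and the local Poincar\'e hypothesis then handle in a single stroke on each cube.
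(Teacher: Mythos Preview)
Your approach is essentially identical to the paper's: the paper also passes through the intersection mean $f_{Q_1\cap Q_2}$, first replacing $f_{Q_1\cup Q_2}$ by $f_{Q_1\cap Q_2}$ (via Lemma~\ref{lemma_1} with $A=\Omega=Q_1\cup Q_2$, which is exactly your convexity-plus-Jensen step and yields the same factor $2^p$), then splitting $\int_{Q_1\cup Q_2}\le\int_{Q_1}+\int_{Q_2}$ and applying Lemma~\ref{lemma_1} on each $Q_j$ with $A=Q_1\cap Q_2$, $c=f_{Q_j}$, and finally invoking the local Poincar\'e inequality.

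Your worry about $2^{2p}$ versus $2^{2p-1}$ is well placed: the paper's own chain, read literally, also produces $2^p\cdot 2^p=2^{2p}$ from the two applications of Lemma~\ref{lemma_1}, and the proof does not display any additional step that would recover the missing factor of~$2$. So you should not spend effort trying to sharpen your argument to match the stated $2^{2p-1}$; the structural proof is exactly the intended one, and the discrepancy in the constant appears to be a slip in the paper rather than a gap in your reasoning. Neither of your suggested ``trimming'' devices (sharpening the first convexity bound, or exploiting the double-counting on $A$) actually recovers a full factor of~$2$ in general, so simply record the constant $2^{2p}$ that the argument honestly gives.
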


\begin{proof}

By Lemma \ref{lemma_1}
\begin{multline*}
\int_{Q_1\cup Q_2}
\vert f(x)-f_{Q_1\cup Q_2}\vert^p\,dx
\le 2^p\int_{Q_1\cup Q_2}\vert f(x)-f_{Q_1\cap Q_2}\vert^p\,dx\\
\le
\frac{2^{2p-1}}{\vert Q_1\cap Q_2\vert}
\sum_{j=1}^{2}\vert Q_j\vert
\int_{Q_j}
\vert f(x)-f_{Q_j}\vert^p\,dx\\
\le
\frac{2^{2p-1}}{\vert Q_1\cap Q_2\vert}
\sum_{j=1}^{2}\vert Q_j\vert B^p_{p,p}(Q_j)
\int_{Q_j}
\vert \nabla f(x)\vert^p\,dx\,.
\end{multline*}

\end{proof}

Suppose that $Q_1$, $R_2$ and $Q_3$ are bounded convex domains.
If $\vert Q_1\cap R_2\vert >0$ and $\vert R_2\cap Q_3\vert >0$ 
and $Q_1$ and $Q_3$ are disjoint we call 
the set $A=Q_1\cup R_2 \cup Q_3$ 
a Whitney triple. 

Now we use  Lemma \ref{lemma_2} for Whitney triples.

\begin{lem}\label{lemma_3}
Let $A$ in $R^n$ be a Whitney triple.
Then,
$$
\int\limits_{A}\vert f(x)-f_{A}\vert^p\,dx\leq
B_{p,p}^p(A)
\int\limits_{A}\vert\nabla f(x)\vert^p\,dx\,,
$$
where 
\begin{multline*}
B_{p,p}^p(A)\leq 
2^{4p-1}\biggl(\frac{\vert Q_1\cup R_2\vert}{\vert R_2\vert }\frac{\vert Q_1\vert}{\vert Q_1\cap R_2\vert}\biggr)B_{p,p}^p(Q_1)\\
+2^{4p-1}\biggl(\frac{\vert Q_1\cup R_2\vert}{\vert Q_1\cap R_2\vert }+\frac{\vert Q_3\cup R_2\vert}{\vert Q_3\cap R_2\vert}\biggr)B^p_{p,p}(R_2)\\
+2^{4p-1}\biggl(\frac{\vert Q_3\cup R_2\vert}{\vert R_2\vert }\frac{\vert Q_3\vert}{\vert Q_3\cap R_2\vert}\biggr)B^p_{p,p}(Q_3)\,.
\end{multline*}
\end{lem}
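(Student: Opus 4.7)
The plan is to apply Lemma \ref{lemma_2} iteratively: first to produce Poincaré inequalities on the two overlapping pieces $E_1 := Q_1 \cup R_2$ and $E_2 := Q_3 \cup R_2$, and then to glue $E_1$ and $E_2$ together, exploiting the fact that $E_1 \cap E_2 \supseteq R_2$ so that the intersection has volume at least $|R_2|$.

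First I would apply Lemma \ref{lemma_2} to the pair $Q_1, R_2$ (which satisfy $|Q_1 \cap R_2| > 0$). Since the individual gradient integrals over $Q_1$ and over $R_2$ are each bounded by $\int_{E_1}|\nabla f|^p\,dx$, this produces a Poincaré-type inequality on $E_1$ of the form
\[
\int_{E_1} |f - f_{E_1}|^p\,dx \leq C_1 \int_{E_1} |\nabla f|^p\,dx,
\]
where $C_1 = \tfrac{2^{2p-1}}{|Q_1 \cap R_2|}\bigl(|Q_1|B_{p,p}^p(Q_1) + |R_2|B_{p,p}^p(R_2)\bigr)$. An entirely analogous estimate with constant $C_2$ holds on $E_2 = Q_3 \cup R_2$, using $Q_3, R_2$.

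Next I would apply Lemma \ref{lemma_2} once more, this time to the pair $E_1, E_2$, whose union is $A$ and whose intersection contains $R_2$, so $|E_1 \cap E_2| \geq |R_2|$. This yields
\[
\int_A |f-f_A|^p\,dx \leq \frac{2^{2p-1}}{|R_2|}\bigl(|E_1|\,C_1 \int_{E_1}|\nabla f|^p\,dx + |E_2|\,C_2 \int_{E_2}|\nabla f|^p\,dx\bigr),
\]
after replacing the true Poincaré constants $B_{p,p}^p(E_j)$ by the upper bounds $C_j$ from the first step, and using $\int_{E_j}|\nabla f|^p \leq \int_A|\nabla f|^p$.

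Finally I would expand the product, collect the $B_{p,p}^p(Q_1)$, $B_{p,p}^p(R_2)$, and $B_{p,p}^p(Q_3)$ terms, and read off the claimed coefficients: the $R_2$ contributions from both $C_1$ and $C_2$ combine to give the middle term with the two ratios $|Q_1\cup R_2|/|Q_1\cap R_2|$ and $|Q_3\cup R_2|/|Q_3\cap R_2|$ (after bounding $|R_2|\cdot|E_j|/|R_2| = |E_j|$ and noting $|E_j|/|Q_j\cap R_2|$ arises naturally), while the $Q_1$ and $Q_3$ contributions acquire the extra factor $|Q_j|/|R_2|$ from the outer application. The overall numerical constant $2^{2p-1}\cdot 2^{2p-1} = 2^{4p-2}$ gets one extra factor of $2$ from crude bookkeeping of overlapping integrals, giving $2^{4p-1}$.

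The main obstacle I expect is the careful accounting of constants: making sure that in the outer application, the volume factor $|E_j|/|E_1 \cap E_2|$ is correctly replaced by the ratios $|Q_j \cup R_2|/|R_2|$ appearing in the statement, and that the three coefficient expressions line up exactly as written. No new analytic input is needed — the entire argument is a double application of Lemma \ref{lemma_2} together with the monotonicity $\int_D|\nabla f|^p \leq \int_A|\nabla f|^p$ for $D \subseteq A$.
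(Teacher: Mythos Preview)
Your proposal is correct and follows essentially the same strategy as the paper: decompose $A$ into the two overlapping unions $E_1=Q_1\cup R_2$ and $E_2=Q_3\cup R_2$, get a Poincar\'e inequality on each via Lemma~\ref{lemma_2}, and then glue using the overlap $E_1\cap E_2=R_2$ (since $Q_1\cap Q_3=\emptyset$). The only organizational difference is that the paper performs the outer gluing by two direct applications of Lemma~\ref{lemma_1} (replacing $f_A$ by $f_{R_2}$, then $f_{R_2}$ by $f_{E_j}$) rather than by a second invocation of Lemma~\ref{lemma_2}; your route is slightly cleaner and in fact yields the constant $2^{4p-2}$ rather than $2^{4p-1}$, so the ``extra factor of $2$ from crude bookkeeping'' you anticipate is not actually needed.
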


\begin{proof}
Using  Lemma \ref{lemma_1}
\begin{multline*}
\|f-f_A\mid L_p(A)\|^p
\le 2^p
\|f-f_{R_2}\mid L_p(A)\|^p
\\
\le 2^p
\biggl(
\int_{Q_1\cup R_2}
\vert f(x)-f_{R_2}\vert^p\,dx+
\int_{R_2\cup Q_3}
\vert f(x)-f_{R_2}\vert^p\,dx\biggr)\\
\le 2^{2p}\frac{\vert Q_1\cup R_2\vert}{\vert R_2\vert}
\int_{Q_1\cup R_2}
\vert f(x)-f_{Q_1\cup R_2}\vert^p\,dx\\
+2^{2p}
\frac{\vert R_2\cup Q_3\vert}{\vert R_2\vert}
\int_{R_2\cup Q_3}
\vert f(x)-f_{R_2\cup Q_3}\vert^p\,dx\,.
\end{multline*}
By Lemma \ref{lemma_2}
\begin{multline*}
\|f-f_A\mid L_p(A)\|^p
\le \\ 2^{2p}
\frac{\vert Q_1\cup R_2\vert}{\vert R_2\vert }
\frac{2^{2p-1}}{\vert Q_1\cap R_2\vert}
\left(\vert Q_1\vert B^p_{p,p}(Q_1)\int_{Q_1}\vert\nabla f(y)\vert^p\,dy+\vert R_2\vert B^p_{p,p}(R_2)\int_{R_2}\vert\nabla f(y)\vert^p\,dy\right)\\
+
2^{2p}
\frac{\vert R_2\cup Q_3\vert}{\vert R_2\vert }
\frac{2^{2p-1}}{\vert R_2\cap Q_3\vert}
\left(\vert R_2\vert B^p_{p,p}(R_2)\int_{R_2}\vert\nabla f(y)\vert^p+\vert Q_3\vert B^p_{p,p}(Q_3)\int_{Q_3}\vert\nabla f(y)\vert^p\,dy\,dy\right)\,.
\end{multline*}
\end{proof}

\begin{defn}
If $A_j$ are Whitney triples and $\vert A_j\cap A_{j+1}\vert >0$ we call
the set $W=\bigcup\limits_{j=1}^{\infty}A_j$ 
a  Whitney complex.
\end{defn}

\begin{thm}\label{theorem_1}
Let 
\begin{equation*}
W=\bigcup_{i=1}^{\infty}A_j\,,
\end{equation*}
be a Whitney complex.
Then
\begin{multline*}
\int\limits_W\vert f(x)-f_{A_1}\vert^p\,dx
\le 2^{p-1}\sum_{j=1}^{\infty}B^p_{p,p}(A_j)\int_{A_j}\vert\nabla f(x)\vert ^p\,dx\\
+
2^{2p}\sum_{j=1}^{\infty}
\int_{A_j}
j^{p-1}
\sum_{\mu =1}^{j}
\frac{B^p_{p,p}(A_{\mu})}{\vert A_{\mu}\cap A_{\mu+1}\vert }
\int_{A_{\mu}}\vert\nabla f(x)\vert^p\,dx\,dy\,.
\end{multline*}
\end{thm}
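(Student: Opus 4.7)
The plan is to telescope through the local averages $f_{A_j}$. For each $x\in A_j$ I split
\begin{equation*}
|f(x)-f_{A_1}|^p \le 2^{p-1}\bigl(|f(x)-f_{A_j}|^p + |f_{A_j}-f_{A_1}|^p\bigr)
\end{equation*}
and integrate over $A_j$. Summing over $j$, the first piece is handled immediately by the Poincar\'e inequality on each Whitney triple $A_j$ proved in Lemma~\ref{lemma_3}, which produces $2^{p-1}\sum_{j}B^p_{p,p}(A_j)\int_{A_j}|\nabla f|^p\,dx$, exactly the first term on the right-hand side of the statement.

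The remaining contribution is $2^{p-1}\sum_j |A_j|\,|f_{A_j}-f_{A_1}|^p$. I handle it with the telescoping identity $f_{A_j}-f_{A_1}=\sum_{\mu=1}^{j-1}(f_{A_{\mu+1}}-f_{A_\mu})$ combined with the Jensen/power-mean inequality
\begin{equation*}
|f_{A_j}-f_{A_1}|^p \le j^{p-1}\sum_{\mu=1}^{j-1}|f_{A_{\mu+1}}-f_{A_\mu}|^p,
\end{equation*}
which accounts for the prefactor $j^{p-1}$ and for the inner summation $\sum_{\mu=1}^{j}$ (extending the upper index from $j-1$ to $j$ is harmless since the extra term is nonnegative). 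The outer integral $\int_{A_j}\,dy$ in the target expression simply records the factor $|A_j|$, since the summand is constant in $y$. Each consecutive difference is controlled using $|A_\mu\cap A_{\mu+1}|>0$: writing
\begin{equation*}
f_{A_{\mu+1}}-f_{A_\mu} = \frac{1}{|A_\mu\cap A_{\mu+1}|}\int_{A_\mu\cap A_{\mu+1}}\bigl((f(x)-f_{A_\mu})-(f(x)-f_{A_{\mu+1}})\bigr)\,dx,
\end{equation*}
then H\"older's inequality together with the elementary bound $(a+b)^p\le 2^{p-1}(a^p+b^p)$ gives
\begin{equation*}
|f_{A_{\mu+1}}-f_{A_\mu}|^p \le \frac{2^{p-1}}{|A_\mu\cap A_{\mu+1}|}\biggl(\int_{A_\mu}|f-f_{A_\mu}|^p\,dx + \int_{A_{\mu+1}}|f-f_{A_{\mu+1}}|^p\,dx\biggr),
\end{equation*}
and the Poincar\'e inequality on $A_\mu$ and $A_{\mu+1}$ converts each integral into $B^p_{p,p}(A_\cdot)\int_{A_\cdot}|\nabla f|^p\,dx$.

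Assembling these ingredients produces the claimed inequality: the $A_{\mu+1}$-contribution from the intersection estimate is absorbed into the $A_\mu$-sum by a shift of the summation index, so that only $B^p_{p,p}(A_\mu)/|A_\mu\cap A_{\mu+1}|$ survives inside the double sum, and the three factors of $2^{p-1}$ accumulated through the initial split, the telescoping, and the H\"older step collect into the prefactor $2^{2p}$ (with some slack). The main obstacle is the constant and index bookkeeping: one has to check that the reindexing absorbing the $A_{\mu+1}$-terms does not produce orphan contributions involving $|A_{\mu-1}\cap A_\mu|$ or $B^p_{p,p}(A_{\mu+1})$, and that the accumulated powers of two genuinely fit under $2^{2p}$.
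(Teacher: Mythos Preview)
Your plan is essentially the paper's proof: the same $2^{p-1}$ split into a local oscillation term handled by Lemma~\ref{lemma_3} and a chain-of-averages term, the same telescoping $f_{A_j}-f_{A_1}=\sum_\mu (f_{A_{\mu+1}}-f_{A_\mu})$ with the power-mean inequality producing $j^{p-1}$, and the same intersection estimate for $|f_{A_{\mu+1}}-f_{A_\mu}|^p$ followed by Poincar\'e on each triple. The reindexing issue you flag is treated in the paper precisely as you propose---the $A_{\mu+1}$ contributions are folded in by extending the inner sum from $\mu\le j-1$ to $\mu\le j$ and passing from $2^{p-1}$ to $2^p$ inside the sum, and the accumulated constant $2^{p-1}\cdot 2^p=2^{2p-1}$ is then relaxed to the stated $2^{2p}$.
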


\begin{proof}
We have to estimate the integral
\begin{multline*}
\int_W\vert f(x)-f_{A_1}\vert^p\,dx
\le 2^{p-1}\sum_{j=1}^{\infty}\int_{A_j}\vert f(x)-f_{A_j}\vert^p\,dx
+2^{p-1}\sum_{j=1}^{\infty}\int_{A_j}
\vert f_{A_j}-f_{A_1}\vert^p\,dx\,.
\end{multline*}
The integral
\begin{equation*}
\int_{A_j}\vert f(x)-f_{A_j}\vert^p\,dx 
\end{equation*}
was handled
in Lemma \ref{lemma_3}.
We estimate the integral
\begin{equation*}
\int_{A_i}\vert f_{A_i}-f_{A_1}\vert^p\,dx\,.
\end{equation*}
Let us write
$
f_{A_j}=f_j\,.
$
By the triangle inequality
\begin{align*}
\vert f_i-f_1\vert\le\sum_{k =1}^{i-1}
\vert f_{k}-f_{k +1}\vert
\end{align*}
where
\begin{equation*}
\vert f_{k}-f_{k +1}\vert^p
\le
\frac{2^{p-1}}{\vert A_{k}\cap A_{k +1}\vert}
\biggl(
\int_{A_{k}}\vert f(x)-f_{k}\vert^p\,dx+
\int_{A_{k +1}}\vert f(x)-f_{k +1}\vert^p\,dx\biggr)\,.
\end{equation*}
Hence,
\begin{multline*}
\int_{A_i}\vert f_{A_i}-f_{A_1}\vert^p\,dx\le \int_{A_i}\biggl(\sum_{k =1}^{i-1}\vert f_{k}-f_{k +1}\vert\biggr) ^p\,dx\\
\le 
\int_{A_i}
\biggl(
\sum_{k =1}^{i-1}
\frac{2^{1-1/p}}{\vert A_{k}\cap A_{k +1}\vert ^{1/p}}
\biggl(
\int_{A_{k}}\vert f(x)-f_{k}\vert^p\,dx+
\int_{A_{k +1}}\vert f(x)-f_{k +1}\vert^p\,dx
\biggr)^{1/p}
\biggr)^{p}\,.
\end{multline*}
Thus, by Lemma \ref{lemma_3}
\begin{multline*}
\int_{A_i}\vert f_{A_i}-f_{A_1}\vert^p\,dy
\le 
\int_{A_i}\biggl(\sum_{k =1}^{i}
\biggl(
\frac{2^{p}}{\vert A_{k}\cap A_{k +1}\vert}
\int_{A_{k}}\vert f(x)-f_{k}\vert^p\,dx\biggr)^{1/p}\biggr)^{p}\,dy\\
\le 
\int_{A_i}\biggl(\sum_{k =1}^{i}
\biggl(
\frac{2^{p}B^p_{p,p}(A_{k})}{\vert A_{k}\cap A_{k+1}\vert}
\int_{A_{k}}\vert \nabla f(x)\vert^p\,dx\biggr)^{1/p}\biggr)^{p}\,dy\\
\le
\int_{A_i}
i^{p-1}
\sum_{k =1}^{i}2^p
\frac{B^p_{p,p}(A_{k})}{\vert A_{k}\cap A_{k+1}\vert }
\int_{A_{k}}\vert\nabla f(x)\vert^p\,dx\,dy\,.
\end{multline*}

Hence,
\begin{multline*}
\int_W\vert f(x)-f_{A_1}\vert^p\,dx
\le 2^{p-1}\sum_{j=1}^{\infty}B^p_{p,p}(A_{j})\int_{A_j}\vert\nabla f(x)\vert ^p\,dx\\
+
2^{2p}\sum_{j=1}^{\infty}
\int_{A_j}
j^{p-1}
\sum_{k =1}^{j}
\frac{B^p_{p,p}(A_{k})}{\vert A_{k}\cap A_{k+1}\vert }
\int_{A_{k}}\vert\nabla f(x)\vert^p\,dx\,dy\,.
\end{multline*}
\end{proof}

\begin{thm}\label{theorem_2}
Let 
\begin{equation*}
W=\bigcup_{i=1}^{\infty}A_j\,,
\end{equation*}
be a Whitney complex.
Then
\begin{multline*}
\int_W\vert f(x)-f_{A_1}\vert^p\,dx
\le 2^{p-1}\sum_{j=1}^{\infty}B^p_{p,p}(A_j)\int_{A_j}\vert\nabla f(x)\vert ^p\,dx\\
+
2^{2p}\sum_{i=1}^{\infty}
\sum_{k =i}^{\infty}
{k}^{p-1}
\vert A_{k}\vert
\frac{B^p_{p,p}(A_i)}{\vert A_{i}\cap A_{i+1}\vert }
\int_{A_{i}}\vert\nabla f(x)\vert^p\,dx\,dy\,.
\end{multline*}
\end{thm}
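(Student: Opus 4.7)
My plan is to derive Theorem \ref{theorem_2} as a direct consequence of Theorem \ref{theorem_1} by rewriting the second term on the right-hand side through an interchange of summation, so no new analytic input is needed beyond what has already been set up.

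First, I would start from the inequality of Theorem \ref{theorem_1}, keeping the first sum
$$
2^{p-1}\sum_{j=1}^{\infty}B^p_{p,p}(A_j)\int_{A_j}|\nabla f(x)|^p\,dx
$$
untouched, since it already matches the first term in Theorem \ref{theorem_2}. Then I would focus on the second sum
$$
2^{2p}\sum_{j=1}^{\infty}\int_{A_j} j^{p-1}\sum_{\mu=1}^{j}\frac{B^p_{p,p}(A_{\mu})}{|A_{\mu}\cap A_{\mu+1}|}\int_{A_{\mu}}|\nabla f(x)|^p\,dx\,dy.
$$
Here the inner expression depends only on the summation indices and on the gradient integrals over the sets $A_\mu$, but not on the integration variable $y\in A_j$. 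Hence the outer integral over $A_j$ contributes the factor $|A_j|$, converting the expression into
$$
2^{2p}\sum_{j=1}^{\infty}|A_j|\, j^{p-1}\sum_{\mu=1}^{j}\frac{B^p_{p,p}(A_{\mu})}{|A_{\mu}\cap A_{\mu+1}|}\int_{A_{\mu}}|\nabla f(x)|^p\,dx.
$$

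Next, I would apply Fubini/Tonelli for nonnegative double sums and interchange the order of summation over the index set $\{(j,\mu):1\le \mu\le j\}=\{(j,\mu): j\ge \mu\ge 1\}$, which gives
$$
2^{2p}\sum_{\mu=1}^{\infty}\frac{B^p_{p,p}(A_{\mu})}{|A_{\mu}\cap A_{\mu+1}|}\left(\int_{A_{\mu}}|\nabla f(x)|^p\,dx\right)\sum_{j=\mu}^{\infty}|A_j|\, j^{p-1}.
$$
Relabeling $\mu\to i$ and $j\to k$ yields exactly the second term in the statement of Theorem \ref{theorem_2}. Combining with the first term of Theorem \ref{theorem_1} completes the proof.

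The only potential subtlety is the legitimacy of swapping the order of summation, but since every quantity involved is nonnegative, Tonelli's theorem for counting measures applies without any convergence assumption, and the rearranged double series equals the original one as elements of $[0,\infty]$. Thus no obstacle arises, and Theorem \ref{theorem_2} is an immediate rewriting of Theorem \ref{theorem_1} in a form where the gradient integrals are organized by the set $A_i$ on which they live rather than by the outer index $j$.
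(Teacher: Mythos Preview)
Your proposal is correct and follows exactly the paper's approach: the paper's proof consists of the single remark that Theorem \ref{theorem_2} is a reformulation of the second term on the right-hand side of Theorem \ref{theorem_1}, and you have simply written out that reformulation explicitly (integrating out the dummy variable $y$ to produce the factor $|A_j|$, then swapping the nonnegative double sum via Tonelli and relabeling indices).
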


\begin{proof}
There is a reformulation of the second term on the right hand side in Theorem \ref{theorem_1}.
\end{proof}

The following extension theorem is needed for fractal type examples.

\begin{thm}\label{theorem_3}
Let $W$ be a fractal tree in $\mathbb R^n$, $n\geq 2$, and let $\Delta_0$ be the starting domain for the tree $W$.
Let 
\begin{equation*}
W=\cup \Delta _k 
\end{equation*}
where $\Delta_k$ are the elements of the tree.
When the element $\Delta_k$ is fixed, let
\begin{align*}
&(\Delta ,\Delta_0, \Delta_k)=\\
&\biggl\{\mbox{ all the elements $\Delta $ of the tree to where we go from $\Delta_0$ through
$\Delta_k$ }\biggr\}\,.
\end{align*}
Let $1\le p <\infty$.
Let $B_{p,p}(\Delta _k)$ be the Poincar\'e constant of the element of $\Delta _k$ of the tree.
Then,
\begin{multline*}
\int_{W}\vert f(x)-f_{\Delta_0}\vert^p\,dx\le
2^{p-1}
\sum_{\Delta_k}
B_{p,p}^p(\Delta_k)\int_{\Delta_k}\vert\nabla f(x)\vert^p\,dx+\\
2^{p-1}
\sum_{ all  \Delta_k}
\sum_{\Delta\in (\Delta_0,\Delta_k,\Delta )}
\#\{ \mbox{ steps from }\Delta_0\mbox{  to }\Delta\} ^{p-1}
\vert \Delta\vert \frac{B_{p,p}^p(\Delta_k)}{\vert \Delta_k\vert}\int_{\Delta_k}
\vert\nabla f(x)\vert^p\,dx\,.
\end{multline*}
\end{thm}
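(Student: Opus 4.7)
The plan is to mimic the two-scale argument used in the proofs of Theorems \ref{theorem_1} and \ref{theorem_2}, but to replace the one-dimensional chain by the unique tree-path joining $\Delta_0$ to an arbitrary element. First I would split
$$
\int_W |f-f_{\Delta_0}|^p\,dx \le 2^{p-1}\sum_{\Delta_k}\int_{\Delta_k}|f-f_{\Delta_k}|^p\,dx + 2^{p-1}\sum_{\Delta_k}|\Delta_k|\,|f_{\Delta_k}-f_{\Delta_0}|^p
$$
via the elementary inequality $|a+b|^p\le 2^{p-1}(|a|^p+|b|^p)$, and apply the local Poincar\'e inequality on each $\Delta_k$ to the first sum. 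This already delivers the first term on the right-hand side of the claim.

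For the second sum I fix a tree element $\Delta$ and denote by $\Delta_0=\Delta^{(0)},\Delta^{(1)},\ldots,\Delta^{(m)}=\Delta$ the unique tree-path from $\Delta_0$ to $\Delta$, with $m=m(\Delta)$ counting the number of steps. The telescoping triangle inequality combined with the discrete power-mean inequality gives $|f_\Delta-f_{\Delta_0}|^p\le m^{p-1}\sum_{j=0}^{m-1}|f_{\Delta^{(j+1)}}-f_{\Delta^{(j)}}|^p$. Each summand is bounded, via the adjacent-average estimate used in the proof of Lemma \ref{lemma_2}, by
$$
\frac{2^{p-1}}{|\Delta^{(j)}\cap\Delta^{(j+1)}|}\Bigl(\int_{\Delta^{(j)}}|f-f_{\Delta^{(j)}}|^p\,dx+\int_{\Delta^{(j+1)}}|f-f_{\Delta^{(j+1)}}|^p\,dx\Bigr),
$$
and a further application of the local Poincar\'e inequality turns each inner integral into $B_{p,p}^p(\Delta^{(\cdot)})\int_{\Delta^{(\cdot)}}|\nabla f|^p\,dx$. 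Under the regularity built into the fractal tree (neighbouring intersections comparable to $|\Delta_k|$) the denominator $|\Delta^{(j)}\cap\Delta^{(j+1)}|$ can be replaced by $|\Delta_k|$ up to a harmless constant.

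The concluding step is a combinatorial reorganisation of the double sum: after inserting the chaining estimate into $\sum_\Delta|\Delta|\,|f_\Delta-f_{\Delta_0}|^p$, the expression is a double sum indexed by the endpoint $\Delta$ and by the path element $\Delta_k\in P(\Delta)$. The key observation is that $\Delta_k$ lies on $P(\Delta)$ if and only if $\Delta\in(\Delta,\Delta_0,\Delta_k)$, i.e.\ $\Delta$ is a descendant of $\Delta_k$ in the tree rooted at $\Delta_0$, so swapping the order of summation converts the expression into
$$
\sum_{\Delta_k}\Bigl(\sum_{\Delta\in(\Delta,\Delta_0,\Delta_k)}|\Delta|\,m(\Delta)^{p-1}\Bigr)\frac{B_{p,p}^p(\Delta_k)}{|\Delta_k|}\int_{\Delta_k}|\nabla f|^p\,dx,
$$
which matches the second term in the statement. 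The main obstacle is precisely this bookkeeping: one must ensure that the endpoint-dependent factors $|\Delta|$ and $m(\Delta)^{p-1}$ remain attached to $\Delta$ after the swap, while $B_{p,p}^p(\Delta_k)$ and $\int_{\Delta_k}|\nabla f|^p\,dx$ become the outer weight; a secondary technical point is verifying that the tree regularity hypothesis is strong enough to justify replacing $|\Delta^{(j)}\cap\Delta^{(j+1)}|$ by $|\Delta_k|$ with only a multiplicative loss that can be absorbed into the constants.
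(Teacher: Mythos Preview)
Your approach is correct and is precisely the intended one: the paper's own proof consists of the single sentence ``A modification of the proof of Theorem~4.4 in \cite{H},'' and the argument you outline---split via $|a+b|^p\le 2^{p-1}(|a|^p+|b|^p)$, apply the local Poincar\'e inequality, telescope along the unique tree-path, use the adjacent-average estimate, then swap the order of summation using the equivalence ``$\Delta_k$ lies on the path to $\Delta$'' $\Leftrightarrow$ ``$\Delta$ is a descendant of $\Delta_k$''---is exactly that modification, carried out in the spirit of Theorems~\ref{theorem_1} and~\ref{theorem_2}. Your remark about replacing $|\Delta^{(j)}\cap\Delta^{(j+1)}|$ by $|\Delta_k|$ is also on target: the implicit regularity of a ``fractal tree'' (made explicit in the example following the theorem, where the extended triangles satisfy $c_1|\Delta_j|\le|\Delta_{j-1}\cap\Delta_j^*|\le c_2|\Delta_j|$) is what licenses this substitution.
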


\begin{proof}
A modification of  the proof of Theorem 4.4 in \cite{H}.
\end{proof}

This theorem allows estimate the Poincar\'e constants in fractal domains:

\begin{exa}
Let $W$ be a fractal tree which is a modification of
the snowflake definition given in \cite{R}.

\begin{enumerate}
\item The starting point is a triangle $\Delta _0$  with the sidelength $a$.\\
\item We form three new triangles  $\Delta_1$  with the sidelength $a/3$, it is the 1st step.\\
\item We form for the previous  three triangles $\Delta_1$  each two new tringles $\Delta_2$
with the sidelength $a/3^2$, so altogether
$3\times 2$ new triangles $\Delta_2$, it is the 2nd step.\\
\item We form for the previous $3\times 2^{j-1}$ triangles 
$\Delta_{j}$, to
  each $\Delta_{j}$ two new triangles 
with the sidelength $a/3^{j+1}$ so altogether
$3\times 2^{j}$ new triangles  $\Delta_{j+1}$, it is the $j+1$ step.\\
\end{enumerate}

So, when we have $\Delta_j$ from the step $j$, its sidelength is $a/3^j$ and its area
is
\begin{equation*}
\frac{\sqrt{3}a^2}{4\cdot 3^{2j}}
\end{equation*}
and
\begin{equation*}
\#\biggl\{\Delta _j:\vert \Delta _j\vert=\frac{\sqrt{3}a^2}{2^2\cdot 3^{2j}}\biggr\} =3\cdot 2^{j-1}\,.
\end{equation*}

We denote by $\Delta_j^*$ the triangle which is obtained from $\Delta_j$ by extending
$\Delta_j$ to inside $\Delta_{j-1}$ so that
\begin{equation*}
c_1\vert\Delta_j\vert\le \vert \Delta_{j-1}\cap \Delta_j^*\vert \le c_2 \vert \Delta_j\vert\,.
\end{equation*}

We have to estimate the last term in Theorem \ref{theorem_3}:
\begin{multline*}
\sum_{j=1}^{\infty}
\#\biggl\{\Delta _j:\vert \Delta _j\vert=\frac{\sqrt{3}a^2}{2^2\cdot 3^{2j}}\biggr\}
\sum_{\Delta_i\in (\Delta_0,\Delta_j,\Delta_i)}
i^{p-1}2^{i-j}\vert \Delta_i^*\vert
\frac{B_{p,p}^p(\Delta_j^*)}{\vert \Delta_j^*\vert}
\int_{\Delta _j^*}\vert \nabla u(x)\vert^p\,dx\\
\le\sum_{j=1}^{\infty}
3\cdot2^{j-1}\sum_{i=j}^{\infty}i^{p-1}\frac{2^{i-j}}{3^{2i}}\frac{2^p}{\pi_p^p}
\frac{a^p}{3^{j(p-2)}}\int_{\Delta _j^*}\vert \nabla u(x)\vert^p\,dx\\
\le\sum_{j=1}^{\infty}
3\cdot2^{j-1}2^{-j}\sum_{i=j}^{\infty}i^{p-1}\frac{2^{i}}{3^{2i}}\frac{2^p}{\pi_p^p}
\frac{a^p}{3^{j(p-2)}}\int_{\Delta _j^*}\vert \nabla u(x)\vert^p\,dx\,.
\end{multline*}
Since,
\begin{equation*}
\sum_{i=j}^{\infty}
i^{p-1}\biggl(\frac{2}{3^2}\biggr)^{i}
\le 
\biggl(\frac{2}{3^2}\biggr)^{j(1-\epsilon )}\,,
\end{equation*}
where $\epsilon >0$ is an arbitrary small number,
\begin{multline*}
\sum_{j=1}^{\infty}
3\cdot2^{j-1}2^{-j}\sum_{i=j}^{\infty}i^{p-1}\frac{2^{i}}{3^{2i}}\frac{2^p}{\pi_p^p}
\frac{a^p}{3^{j(p-2}}\\
\le 3\frac{2^{p-1}}{\pi _p^p}a^p\sum_{j=1}^{\infty}
\biggl(\frac{2}{3^2}\biggr)^{j(1-\epsilon)}\frac{1}{3^{j(p-2)}}=
3\frac{2^{p-1}}{\pi _p^p}a^p\sum_{j=1}^{\infty}
\frac{2^{j(1-\epsilon)}}{3^{j(p-2\epsilon)}}\,,
\end{multline*}
where the sum converges, since $\epsilon >0$ can be taken arbitrarily small.
\end{exa}

Results of Section~2 allow obtain variation of Poincar\'e constants under quasiconformal perturbations fractal type domains.

\begin{thm}
Let $\widetilde{W}$ be an image of the Whitney complex $W$ under a Lipschitz $K$-quasiconformal homeomorphism $\varphi: W\to\widetilde{W}$ such that
$$
Q_{p}(W)=\ess\sup\limits_{x\in W}|D\varphi(x)|^{\frac{p-n}{p}}<\infty.
$$
Then 
$$
\frac{1}{\mu_p(\widetilde{W})}\leq K
Q^p_{p}(W)\||D\varphi|^n\mid L_{\infty}(W)\|
\cdot \frac{1}{\mu_p(W)}.
$$
 
\end{thm}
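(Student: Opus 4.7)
The plan is to derive the stated spectral estimate as a direct application of Theorem A, with the Whitney complex $W$ playing the role of the $(p,p)$-Sobolev-Poincar\'e domain $\Omega$ there. The entire content of the proof is therefore the verification that $W$ itself is a $(p,p)$-Sobolev-Poincar\'e domain; once this is in hand, the quasiconformal transfer mechanism already built up in Section~2 does the rest.

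First, I would invoke Theorem \ref{theorem_2} (or, if the tree structure is preferred, Theorem \ref{theorem_3}) to obtain the fundamental bound
$$
\int_W |f(x)-f_{A_1}|^p\,dx \leq C(W,p)\int_W |\nabla f(x)|^p\,dx,
$$
where
$$
C(W,p)=2^{p-1}\sum_{j=1}^{\infty}B^p_{p,p}(A_j) + 2^{2p}\sum_{i=1}^{\infty}\sum_{k=i}^{\infty} k^{p-1}|A_k| \frac{B^p_{p,p}(A_i)}{|A_i\cap A_{i+1}|}.
$$
Under the same sort of geometric decay as in the snowflake example of Section~3, this series converges, so $C(W,p)<\infty$. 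Next I would apply Lemma~\ref{lemma_1} with $A=A_1$ to pass from the anchored mean $f_{A_1}$ to the infimum over $c\in\mathbb R$, obtaining
$$
\inf_{c\in\mathbb R}\|f-c\mid L_p(W)\| \leq B_{p,p}(W)\|\nabla f\mid L_p(W)\|
$$
with $B_{p,p}(W)\leq 2(|W|/|A_1|)^{1/p}C(W,p)^{1/p}<\infty$. This exhibits $W$ as a $(p,p)$-Sobolev-Poincar\'e domain, and by the variational characterization noted just after Theorem~B we have $\mu_p(W)^{-1/p}=B_{p,p}(W)$, so $\mu_p(W)$ is well defined and strictly positive.

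With this in place, the hypotheses of Theorem A are met by $\Omega=W$ and $\widetilde\Omega=\widetilde W$: the Lipschitz $K$-quasiconformal homeomorphism $\varphi\colon W\to\widetilde W$ and the bound $Q_p(W)<\infty$ are assumed, and the Sobolev-Poincar\'e property was just verified. Applying Theorem A verbatim yields
$$
\frac{1}{\mu_p(\widetilde W)}\leq K Q_p^p(W)\,\||D\varphi|^n\mid L_\infty(W)\|\cdot\frac{1}{\mu_p(W)},
$$
which is the claim.

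The main obstacle is the convergence of the double sum defining $C(W,p)$. This is a geometric condition on the Whitney complex that links the Poincar\'e constants $B_{p,p}(A_j)$, the measures $|A_j|$, and the overlap measures $|A_j\cap A_{j+1}|$; once the complex is sufficiently regular (as for snowflake-type fractals where each step shrinks by a fixed ratio and the number of new cells grows geometrically), a direct summation argument of the kind carried out in Example~3 after Theorem~\ref{theorem_3} suffices, with no further analytic input required beyond the already-established results of Sections~2 and~3.
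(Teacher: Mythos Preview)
Your proposal is correct and matches the paper's approach: the paper states this theorem without an explicit proof, the preceding sentence making clear that it is simply Theorem~A specialized to $\Omega=W$, with the $(p,p)$-Sobolev--Poincar\'e property of the Whitney complex supplied by the estimates of Section~3; you have filled in precisely these details. One minor remark: the passage from $\|f-f_{A_1}\mid L_p(W)\|$ to $\inf_{c}\|f-c\mid L_p(W)\|$ is immediate (the infimum is at most any particular choice of $c$), so invoking Lemma~\ref{lemma_1} and the extra factor $2(|W|/|A_1|)^{1/p}$ is unnecessary there.
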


\end{document}